\documentclass[12pt]{amsart}
\usepackage{graphics}
\usepackage{latexsym}
\usepackage[dvips]{epsfig}
\usepackage{color}
\usepackage{graphicx}
\usepackage{amsmath}
\usepackage{amssymb}
\usepackage{amsfonts}
\usepackage{eufrak}
\usepackage{amstext}
\usepackage{amsopn}
\usepackage{amsbsy}
\usepackage{amscd}
\usepackage{amsxtra}
\usepackage{amsthm}
\usepackage{bm}
\usepackage{CJK}

\topmargin=-0.7in \hoffset=-1cm \voffset=2cm \textheight=220mm
\textwidth=150mm

\newcommand{\R}{{\mathbb R}}

\newcommand{\beq}{\begin{equation}}
\newcommand{\eeq}{\end{equation}}
\newcommand{\ben}{\begin{eqnarray}}
\newcommand{\een}{\end{eqnarray}}
\newcommand{\beno}{\begin{eqnarray*}}
\newcommand{\eeno}{\end{eqnarray*}}

\newtheorem{thm}{Theorem}[section]

\newtheorem{lem}[thm]{Lemma}

\newtheorem{rmk}[thm]{Remark}

\setcounter{equation}{0}

\begin{document}

\title[solutions with polynomial growth]{On solutions with polynomial growth to an autonomous nonlinear elliptic problem}

\author{ Kelei Wang}
 \address{Wuhan Institute of Physics and Mathematics,
The Chinese Academy of Sciences, Wuhan 430071, China. Email: wangkelei@wipm.ac.cn}

\author{Juncheng Wei}
\address{Department of Mathematics, The Chinese University of Hong Kong, Shatin, Hong Kong. Email: wei@math.cuhk.edu.hk}

\begin{abstract}
We study the following nonlinear elliptic problem
\[ -\Delta u =F^{'} (u)  \ \mbox{in} \ \R^n\]
where $F(u)$ is a periodic function. Moser (1986) showed that for any minimal and nonself-intersecting solution, there exist $ \alpha \in \R^n$ and $ C>0$ such that
\[(*) \ \ \ \  \ \  \ \ \ \ \ | u- \alpha \cdot x | \leq C.\]
He also showed the existence of solutions with any prescribed $\alpha \in \R^n$. In this note, we first prove that any solution satisfying (*) with nonzero vector $\alpha$ must be one dimensional. Then we show that in $\R^2$, for any positive integer $d\geq 1$ there exists a solution  with polynomial growth $|x|^d$.

\end{abstract}

\keywords{minimal solutions, Aubry-Mather theory, harmonic polynomials}

\subjclass{ 35J25, 35B45, 35B33}

\maketitle

\date{}

\section{Introduction and Main Results}
\setcounter{equation}{0}

In search of analogue of  Aubry-Mather theory for quasilinear partial differential equations in $\R^n$, Moser \cite{m}  studied the following equation
\begin{equation}
\label{0}
\sum_{i=1}^n \frac{\partial}{\partial x_i} {\mathcal F}_{p_i} (x, u, Du)- {\mathcal F}_u (x, u, Du)=0
\end{equation}
which is the Euler-Lagrangian equation for the functional
\begin{equation}
\int_{\R^n} {\mathcal F} (x, u, Du) dx
\end{equation}
where $ {\mathcal F}$ is $ 1-$periodic in all variables $ x_1, ..., x_n$ and $u$, elliptic and of quadratic growth in $p= Du$.

 A solution  $ u(x)$  of (\ref{0}) is called {\em minimal} if
\begin{equation}
\int_{\R^n} \Big[ {\mathcal F} (x, u+\varphi, Du+D \varphi) dx - {\mathcal F} (x, u, Du) \Big] dx \geq 0, \ \forall \ \varphi \in C_0^\infty (\R^n).
\end{equation}
A solution of (\ref{0}) is said  to be {\em without self intersections} or WSI if (i) for each $j \in {\mathbb Z}^n$ and $ j_{n+1} \in {\mathbb Z}, u(x+j)- u(x) - j_{n+1}$ does not change sign for $ x \in \R^n$, or (ii) for some $ j \in {\mathbb Z}^n$ and $ j_{n+1} \in {\mathbb Z}, u(x+j)\equiv u(x) + j_{n+1}$.

For  minimal and WSI   solutions to (\ref{0}), Moser \cite{m} showed: (1) There exists a unique vector $\alpha \in \R^n$, the so-called {\em rotation vector} and a constant $C$, such that
\begin{equation}
\label{1m}
| u(x) - \alpha \cdot x | \leq C, \ \forall \ x \in \R^n.
\end{equation}
\noindent
(2) Conversely, for every vector $\alpha \in \R^n$ there exists a minimal solution $u$ with rotation vector $\alpha$ and a constant $C$ and satisfying (\ref{1m}).

Moser's paper \cite{m} has received lots of attention in the literature.  Among many results, we mention that  Bangert \cite{ba} showed the existence of  heteroclinic states under some gap conditions, and  Rabinowitz and Stredulinsky \cite{rs1, rs2} developed variational gluing methods for mixed states of Allen-Cahn type equations. (See also \cite{rs3} for non-autonomous case.)  There is also a strong connection between Moser's problem and De Giorgi's conjecture. See Farina and Valdinoci \cite{fv}.  For the latest developments, we refer to the survey paper by Rabinowitz \cite{r} and the references therein.

\medskip

In this note, we consider the autonomous Moser's problem, namely we study the following problem
\begin{equation}
\label{equation}
-\Delta u = F^{'}(u) \ \mbox{in} \ \R^n
\end{equation}
where $ F(u)$ is a  smooth periodic  function. A typical example is the so-called sine-Gordon nonlinearity $ F(u)=1-\cos (u)$.

Our first result is a classification theorem on solutions to (\ref{equation}) satisfying (\ref{1m}).

\begin{thm}
\label{thm1}
Let $u\in C^2(\mathbb{R}^n)$ be a solution of \eqref{equation}. Assume that there exist a nonzero vector $\alpha \in\mathbb{R}^n$
and a constant $C>0$ such that
\begin{equation}\label{condition}
|u(x)-\alpha \cdot x|\leq C~~\text{for}~~\forall x\in\mathbb{R}^n.
\end{equation}
Then  there is a function $v \in C^2 (\R^n)$ such that $u(x)= v(\alpha \cdot x)$.
\end{thm}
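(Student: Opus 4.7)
The plan is to prove the rigidity statement by a two-step sliding argument, exploiting the periodicity of $F$ to perform a compactness/translation scheme. First I would show that $u$ is strictly monotone along the direction $\alpha$, and then use this monotonicity to force invariance under all translations perpendicular to $\alpha$.

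Two preliminaries are needed. Writing $\tilde u := u - \alpha\cdot x$ (bounded by $C$ and satisfying $-\Delta \tilde u = F'(u)$ with bounded right-hand side), standard interior elliptic estimates give a uniform bound $\|\nabla u\|_\infty \leq C'$. More importantly, the periodicity of $F$ affords the following compactness principle: for any sequence $x_k\in\R^n$ and integer multiples $m_k$ of the period of $F$, the shifts $u_k(x):=u(x+x_k)-m_k$ solve the same autonomous equation; choosing $m_k$ within one period so that $u_k(0)$ stays bounded keeps $|u_k(x)-\alpha\cdot x-\beta_k|\leq C$ with $\beta_k:=\alpha\cdot x_k-m_k$ bounded, and elliptic regularity then produces a subsequence converging in $C^2_{\mathrm{loc}}$ to a solution $u_\infty$ of the same equation, still satisfying a linear growth bound of the same form.

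For Step 1, set $e:=\alpha/|\alpha|^2$ and $w_t(x):=u(x+te)-u(x)$; the hypothesis gives $|w_t(x)-t|\leq 2C$, so $w_t>0$ for every $t>2C$. Define $t_\ast:=\inf\{t>0:w_t\geq 0 \text{ on }\R^n\}\in[0,2C]$. The cocycle identity $w_{s+t}(x)=w_s(x+te)+w_t(x)$ combined with continuity in $t$ shows that $t_\ast=0$ is equivalent to $w_t\geq 0$ for all $t>0$, i.e.\ to $\alpha\cdot\nabla u\geq 0$; the strong maximum principle applied to $\alpha\cdot\nabla u$, a bounded solution of the linearized equation $-\Delta\phi=F''(u)\phi$ that cannot be identically zero in view of the linear growth, then upgrades this to $\alpha\cdot\nabla u>0$ on $\R^n$. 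To rule out $t_\ast>0$, one notes that $w_{t_\ast}\geq 0$ with $\inf_x w_{t_\ast}=0$; picking $x_k$ with $w_{t_\ast}(x_k)\to 0$ and passing to the compactness limit $u_\infty$ above, the function $W(x):=u_\infty(x+t_\ast e)-u_\infty(x)$ is a nonnegative bounded solution of $-\Delta W=c(x)W$ (with $c$ bounded by the mean value theorem) vanishing at the origin; the strong maximum principle forces $W\equiv 0$, hence $u_\infty(x+kt_\ast e)=u_\infty(x)$ for every $k\in\mathbb{Z}$, contradicting the linear growth of $u_\infty$ along $\alpha$ as $k\to\infty$.

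For Step 2, fix a unit vector $a\perp\alpha$ and $s\in\R$, and consider $v_{s,t}(x):=u(x+sa+te)-u(x)$; since $\alpha\cdot(sa+te)=t$, again $|v_{s,t}(x)-t|\leq 2C$, and Step 1 makes $v_{s,t}$ strictly increasing in $t$. Let $t^+(s):=\inf\{t\in\R:v_{s,t}\geq 0\}$. If $t^+(s)\leq 0$ for every $s$, the monotonicity in $t$ gives $v_{s,0}\geq 0$, i.e.\ $u(x+sa)\geq u(x)$; applying the same conclusion with $-s$ in place of $s$ forces equality, so $u$ is invariant under every translation perpendicular to $\alpha$ and the theorem follows with $v(r):=u(r\alpha/|\alpha|^2)$. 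To rule out $t^+(s_0)>0$, the same touching/compactness/strong-maximum-principle argument of Step 1 produces a limit $u_\infty$ invariant under the translation vector $s_0a+t^+(s_0)e$, whose $\alpha$-component equals $t^+(s_0)>0$, and iterating again contradicts the linear growth of $u_\infty$ along $\alpha$. The main obstacle I anticipate is the compactness step: one must pick the period shifts $m_k$ so that the translated solutions solve the autonomous equation, retain uniform $C^1$ bounds, and inherit the key nonnegativity plus touching at the origin; once this is in hand, the strong maximum principle for the linearization (whose coefficients are bounded since $F$ is smooth) drives both steps uniformly.
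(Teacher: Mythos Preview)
Your proposal is correct and uses essentially the same sliding scheme as the paper: translate $u$, subtract an integer multiple of the period of $F$ to keep the shifted solution bounded at the origin, extract a $C^2_{\mathrm{loc}}$ limit by elliptic estimates, and apply the strong maximum principle to force the limit to be periodic along a direction with positive $\alpha$-component, contradicting the linear growth. The paper organizes this in one pass---it slides along \emph{every} unit vector $e$ with $e\cdot\alpha>0$, deduces $e\cdot\nabla u\geq 0$, and then lets $e$ tend to directions orthogonal to $\alpha$ (where the inequality for both $\pm e$ forces $e\cdot\nabla u\equiv 0$)---whereas you split it into Step~1 (monotonicity along $\alpha$) and Step~2 (oblique sliding); the content is identical. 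One minor point: with your definition $t_\ast=\inf\{t>0:w_t\geq 0\}$, the implication ``$t_\ast=0\Rightarrow w_t\geq 0$ for all $t>0$'' does not follow from the cocycle identity alone; it is cleaner either to use the paper's definition $t_\ast=\inf\{t:w_s\geq 0\ \forall s\geq t\}$, or to note that your touching argument actually shows $\inf_x w_t>0$ whenever $t>0$ and $w_t\geq 0$, which (together with the uniform Lipschitz bound in $t$) makes $\{t>0:w_t\geq 0\}$ open as well as closed in $(0,\infty)$.
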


In the above theorem, $\alpha \not =0 $ is necessary. In fact for Allen-Cahn or Sine-Gordon equations, there are bounded solutions with multiple transitions (\cite{A-C-M, D-F-P, dkpw}). Theorem \ref{thm1} also holds when $-\Delta u=f(u)$ where $f$ is periodic. Note that it can be directly shown that one dimensional solutions satisfying \eqref{condition} have no self-intersection.

Theorem \ref{thm1} has been proved by Farina and Valdinoci \cite{fv} under the minimality condition. Here we have removed the minimality assumption. Theorem \ref{thm1} shows that unbounded solutions to (\ref{equation}) with linear growth are all one dimensional. Notice that $\alpha \cdot x$ is the simplest nonconstant harmonic function in $\R^n$.  Based on this, J. Byeon and P. Rabinowitz \footnote{Private discussion} asked

\medskip

\noindent
{\em Question:  given any harmonic function, w, on $\R^n$, is there  a solution, u, of (\ref{0}) with $ || u - w ||_{L^{\infty}(\R^n)}$ bounded?}

\medskip

The following theorem answers the question partially.

\begin{thm}
\label{thm2}
Let $n=2$ and $ d \geq 2$. Assume that $F(u)$ is even.  Let $ \varphi (x,y)$ be the real part of the harmonic polynomial $z^d$.  (Here $z=x+i y.$)  Then there exists a solution to (\ref{equation}), enjoying the same symmetry as $\varphi (x,y)$ and satisfying
\begin{equation}
\label{est}
| u(x, y)- \varphi (x, y) | \leq C (1+|z|)^{\frac{3}{2}}.
\end{equation}
Furthermore, for $d \geq 3$  we also have the following improved upper bound:
\begin{equation}
\label{newest}
| u(x, y)- \varphi (x, y) | \leq C (1+|z|)^{2-\frac{d}{2}}.
\end{equation}
\end{thm}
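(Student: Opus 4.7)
The approach is variational: we construct $u$ as the $C^2_{\mathrm{loc}}$-limit of minimizers $u_R$ on balls $B_R\subset\R^2$ of the energy
\begin{equation*}
E_R(u)=\int_{B_R}\left(\tfrac12|\nabla u|^2+F(u)\right)dx
\end{equation*}
over the admissible class of $H^1(B_R)$ functions that (i) satisfy $u=\varphi$ on $\partial B_R$ and (ii) are invariant under the dihedral symmetry group of $\varphi$, namely rotations by $2\pi/d$ and reflection across the $x$-axis; when $d$ is odd we additionally impose $u(-z)=-u(z)$, which is consistent with the Euler--Lagrange equation because $F$ is even. Since $F$ is periodic and hence bounded, $E_R$ is coercive modulo its fixed Dirichlet datum and weakly lower semicontinuous, so a symmetric minimizer $u_R$ exists and is a classical solution of $-\Delta u=F'(u)$ in $B_R$.

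Set $w_R=u_R-\varphi$. Two basic estimates come essentially for free. First, since $\varphi$ is harmonic and $w_R$ has zero boundary trace, a direct expansion yields
\begin{equation*}
E_R(u_R)-E_R(\varphi)=\int_{B_R}\left(\tfrac12|\nabla w_R|^2+F(\varphi+w_R)-F(\varphi)\right)dx,
\end{equation*}
so minimality and $\|F\|_\infty\le C$ give $\int_{B_R}|\nabla w_R|^2\le CR^2$. Second, $-\Delta w_R=F'(\varphi+w_R)$ has pointwise-bounded right-hand side, so the maximum principle yields the a priori (but insufficient) quadratic bound $|w_R|\le CR^2$, and interior Schauder estimates then produce uniform $C^{2,\alpha}_{\mathrm{loc}}$ control modulo the pointwise growth rate of $w_R$.

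The crux is to sharpen the quadratic bound to the claimed $(1+|z|)^{3/2}$, uniformly in $R$. Pointwise bounds on $F'$ alone cannot produce a sub-quadratic estimate; the improvement must exploit the periodicity of $F$, which makes $F'(\varphi+w_R)$ oscillate and average to nearly zero wherever $|\nabla\varphi|$ is large compared to the local oscillation of $w_R$, i.e.\ in the bulk of each sector of $\varphi$. A natural way to quantify this is to write $F'=G''$ with $G$ bounded and periodic, and to recast the forcing as the divergence of a vector field of size $|\nabla\varphi|^{-1}$ plus lower-order corrections; testing against $w_R$ then extracts the cancellation. Combining this bulk gain with a crude estimate on a tubular neighbourhood of the nodal set $\{\varphi=0\}$ (a union of $d$ lines through the origin, where the averaging fails) produces the $(1+|z|)^{3/2}$ growth; the nodal neighbourhood narrows as $d$ grows, and iterating the argument sharpens the exponent to $2-d/2$ for $d\ge 3$.

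Once the growth bound is known uniformly in $R$, interior Schauder estimates give uniform $C^{2,\alpha}$ bounds on every compact subset of $\R^2$; a diagonal extraction produces a subsequence $u_{R_k}\to u$ in $C^2_{\mathrm{loc}}$ solving $-\Delta u=F'(u)$ on $\R^2$, inheriting the dihedral symmetry of $\varphi$ and satisfying \eqref{est} (respectively \eqref{newest} when $d\ge 3$). The principal obstacle is the averaging step: converting the uniform pointwise bound $\|F'\|_\infty$ on the forcing into strictly sub-quadratic growth for $w_R$ by systematically exploiting the periodicity of $F$.
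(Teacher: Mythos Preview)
Your variational setup and the energy bound $\int_{B_R}|\nabla w_R|^2\le CR^2$ are essentially the same as the paper's. The divergence occurs at the step you flag as ``the crux'': the paper does \emph{not} extract the $(1+|z|)^{3/2}$ bound from any averaging or oscillation of $F'$. Instead, for each scale $r\in(0,R)$ it introduces the harmonic replacement $\varphi^r$, i.e.\ the harmonic function on $B_r$ with $\varphi^r=u^R$ on $\partial B_r$. Local minimality of $u^R$ and boundedness of $F$ give $\int_{B_r}|\nabla(u^R-\varphi^r)|^2\le Cr^2$ at \emph{every} scale $r$, not just $r=R$; after rescaling by $r^{-d}$ and applying elliptic estimates, this yields $\sup_{B_{r/2}}|u^R-\varphi^r|\le Cr^{3/2}$. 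A dyadic telescoping argument on the harmonic functions $\varphi^r$ (using that the symmetry kills $\varphi^r(0)$ and $\nabla\varphi^r(0)$) then shows $\sup_{B_{r/2}}|\varphi^r-\varphi|\le Cr$, and the two combine to give \eqref{est}. No periodicity of $F'$ is used here beyond $\|F\|_\infty<\infty$.

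Your proposed mechanism has two problems. First, you only record the energy bound at the single scale $R$, comparing with $\varphi$; this is not enough to control $w_R$ pointwise on interior balls uniformly in $R$. Second, the averaging heuristic is mis-located: for $\varphi=\mathrm{Re}(z^d)$ one has $|\nabla\varphi|=d|z|^{d-1}$, which vanishes only at the origin, so there is no failure of oscillation along the nodal lines $\{\varphi=0\}$ as you suggest. Writing $F'=G''$ and integrating by parts against $w_R$ can gain a factor $|\nabla\varphi|^{-1}$ in principle, but turning this into a \emph{pointwise} sub-quadratic bound, uniformly in $R$, requires more than ``testing against $w_R$'', and your outline does not indicate how.

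Oscillation does enter the paper's argument, but only for the refinement \eqref{newest}: after passing to exponential polar coordinates and writing $v(t,\theta)=e^{-dt}u$, the paper Fourier-expands in $\theta$, derives ODEs for the coefficients $c_j(t)$, and uses a stationary-phase estimate for $\int_0^{2\pi} f(e^{dt}v)\cos(j\theta)\,d\theta=O(e^{-dt/2})$ to control the inhomogeneous terms. This is a precise oscillatory-integral step, not an iteration of the first argument.
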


\medskip

\begin{rmk}
If $d \geq 4$, then $\frac{d}{2}\geq 2$. Thus for $d \geq 4$, we answered Byeon-Rabinowitz's question affirmatively, in the autonomous setting  (\ref{equation}).  Note also that for $d >4$, we have better decay estimates. The key to obtain (\ref{newest}) is some oscillatory integral estimate (see (\ref{4.1}) below). For $d=2$ or $3$, this estimate is  not sufficient. We believe that the $L^\infty$ bound should also hold for $d=2,3$.
\end{rmk}

\begin{rmk}
Another interesting question is whether or not the evenness condition is necessary.
\end{rmk}

\medskip

In the rest of the paper, we prove Theorem \ref{thm1} in Section 2, the estimate (\ref{est}) of Theorem \ref{thm2} in Section 3 and the better estimate (\ref{newest}) of Theorem \ref{thm2} in Section 4 respectively.

\medskip

\noindent {\bf Acknowledgment.} The second author thanks Professors J. Byeon and  P. Rabinowitz for suggesting the problem and nice discussions.

\section{Proof of Theorem \ref{thm1}}
 \setcounter{equation}{0}

In this section, we prove Theorem \ref{thm1} by the method of moving planes.

\medskip

Without loss of generality, assume that $|\alpha|=1$ and $\alpha$ is the
$x_n$ direction. We use the notation that $x=(x^\prime,x_n)$ where
$x^\prime\in\mathbb{R}^{n-1}$. For any unit vector $e$ such that
$e\cdot \alpha>0$, we will prove that for every $t\geq 0$,
\begin{equation}\label{sliding goal}
u(x+te)\geq u(x)~~\text{for all}~~x\in\mathbb{R}^n.
\end{equation}
This then implies that $e\cdot\nabla u\geq 0$ in $\mathbb{R}^n$. By continuity, this also holds for $e$ and $-e$, if $e\cdot \alpha=0$, which then implies that $e\cdot\nabla u\equiv 0$ and that $u$ depends only on $\alpha\cdot x$.

For any $t>0$, define $u^t(x)=u(x+te)$. First we note that, since $e_n=e\cdot \alpha>0$, for $t$ large, by \eqref{condition},
\[u^t(x)\geq x_n+te_n-C\geq x_n+C\geq u(x).\]
Hence we can define
\[t_0:=\inf\{t: \forall s\geq t, \eqref{sliding goal}~~\text{holds}\}.\]

\medskip

Assuming that $t_0>0$, we will get a contradiction. First note that $u^{t_0}\geq u$ by continuity.
It is impossible to have $u^{t_0}\equiv u$, because this would imply that $u$ is $t_0$ periodic in the $e$
 direction, which contradicts \eqref{condition}. ($e\cdot \alpha>0$ implies that $u$ goes to infinity when $x$ goes to infinity along the $e$ direction.)
Hence by the strong maximum principle we have
\begin{equation}\label{2.6}
u^{t_0}> u.
\end{equation}

By the definition of $t_0$, there exists $t_k<t_0$ such that
\[\inf_{\mathbb{R}^n}(u^{t_k}-u)<0.\]
In particular, there exists $x_k\in\mathbb{R}^n$ such that
\begin{equation}\label{2.7}
(u^{t_k}-u)(x_k)<0.
\end{equation}

Assume the period of $F(u)$ is $T$. By \eqref{condition}, we can take a constant $a_k$, which is a multiple of $T$ such that
\[u_k(x):=u(x+x_k)-a_k\]
satisfies $|u_k(0)|\leq T$. \eqref{2.6} and \eqref{2.7} imply respectively that
\begin{equation}\label{2.8}
u_k^{t_0}> u_k.
\end{equation}
\begin{equation}\label{2.9}
(u_k^{t_k}-u_k)(0)<0.
\end{equation}
Note that $u_k$ still satisfies \eqref{condition} with a larger constant $2C+T$, which is independent of $k$. By the elliptic regularity, $u_k$ is uniformly bounded in $C^3(B_R(0))$ for any $R>0$. Hence we can take a subsequence of $u_k$
such that $u_k$ converges to $u_\infty$ in $C^2(B_R(0))$ for any $R>0$. Letting $k\to+\infty$ in \eqref{2.8} and \eqref{2.9}, we get
\[u_\infty^{t_0}\geq u_\infty,~~~u_\infty^{t_0}(0)=u_\infty(0).\]
By the strong maximum principle, $u_\infty^{t_0}\equiv u_\infty$. That is, $u_\infty$ is $t_0$ periodic along the direction $e$. Since $u_\infty$ satisfies \eqref{condition}, this is a contradiction and also finishes the proof of Theorem \ref{thm1}.


\section{Proof of Theorem \ref{thm2}}
\setcounter{equation}{0}

In this section, we prove the existence of solutions satisfying estimate (\ref{est}) in Theorem \ref{thm2}.

 We denote $z=x+iy\in\mathbb{C}$. We also identify $z=re^{i\theta}$ with $(x,y)\in\mathbb{R}^2$. Let $d \geq 2$ be a positive integer and
$\varphi (x, y)=Re(z^d)$. Denote $G$ the
rotation of order $2d$. Note that $\varphi(Gz)=-\varphi(z)$.

Let $D=\{-\frac{\pi}{2d}<\theta<\frac{\pi}{2d}\}$ be a nodal domain of $\varphi$.
For every $R>0$, take $D_R=B_R(0)\cap D$ and $u^R$ to be a minimizer of the functional
\[\int_{D_R}\frac{1}{2}|\nabla u|^2+F(u),\]
with the Dirichlet boundary condition $u=\varphi$ on $\partial D_R$.

\medskip

First, the minimizer exists since $F(u)$ is a bounded periodic function. Second, we may assume that $ u^R \geq 0$ in $D_R$ since otherwise we may replace  the minimizer with $ |u^R|$ (noting that $F$ is even and $ F(|u|)=F(u)$). Since $F^\prime(u)=0$, the strong maximum principle implies that $u_R>0$ in $D_R$.
Once again by the oddness of $F^\prime(u)$ and the fact that $F^\prime(0)=0$,  by rotational symmetry of $\frac{2\pi}{d}$, $u^R$ can be extended to $B_R(0)$
and it satisfies the equation $ -\Delta u= F^{'} (u)$  in $B_R(0)$. By construction, $u^R$ has the same symmetry as $\varphi$, that is, $u^R(Gz)=-u^R(z)$ for $z\in B_R(0)$.\footnote{Another method to get $u^R$ is to find a minimizer of $\int_{B_R}\frac{1}{2}|\nabla u|^2+F(u)$ in the invariant class $\{u=\varphi~~\text{on}~~\partial B_R,~~\text{and}~~u(Gz)=-u(z)\}$. $u^R$ can be proved to satisfy $ -\Delta u= F^{'} (u)$ in $B_R(0)$ by the heat flow method. Note that because $F(u)$ is even, the invariant class is positively invariant by the heat flow.} In particular, the nodal domain of $u^R$ is the same with $\varphi$ and $\{u^R=0\}$ is composed by $2d$ rays with the form $re^{i\frac{k\pi}{2d}}$ for $k=1,3,\cdots,4d-1$ and $r\in[0,R]$.

For any $r\in(0,R)$, let $\varphi^r$ be the solution of
\begin{equation*}
\left\{ \begin{aligned}
&\Delta \varphi^r=0,~~\text{in}~~B_r,\\
&\varphi^r=u^R,~~\text{on}~~\partial B_r.
                          \end{aligned} \right.
\end{equation*}
Since $u^R$ has the same symmetry as $\varphi$, by the uniqueness of the solution to the above problem, $\varphi^r$ has the same symmetry as $\varphi$, and $\{\varphi^r=0\}$ is composed by $2d$ rays of the form $re^{i\frac{k\pi}{2d}}$ for $d=1,3,\cdots,4d-1$ and $r\in[0,r]$. This implies that $\varphi^r=u^R$ on $\partial D_r$ and $\varphi^r$ is also the harmonic extension of $u^R$ from $\partial D_r$ to $D^r$.

\medskip

\begin{lem}
There exists a constant $C$, independent of $r$ and $R$, such that
\begin{equation}\label{comparison of energy}
\int_{B_r(0)}|\nabla\varphi^r-\nabla u^R|^2\leq Cr^2.
\end{equation}
\end{lem}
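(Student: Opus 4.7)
The plan is to derive the estimate from two ingredients: an energy identity coming from the harmonicity of $\varphi^r$, and the minimizing property of $u^R$ together with the boundedness of $F$. The key observation is that one can replace $u^R$ inside $D_r$ by $\varphi^r$ and still obtain an admissible competitor, because both functions agree on the full boundary $\partial D_r$: on $\partial B_r \cap \bar{D}$ they agree by the Dirichlet data for $\varphi^r$, and on $B_r \cap \partial D$ they both vanish thanks to the $G$-antisymmetry that forces $u^R$ and $\varphi^r$ to be zero along the rays $\{\theta = k\pi/(2d)\}$.

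First I would set $w := u^R - \varphi^r$, which vanishes on $\partial B_r$. Since $\Delta \varphi^r = 0$ in $B_r$ and $w=0$ on $\partial B_r$, integration by parts gives $\int_{B_r}\nabla w\cdot\nabla\varphi^r = 0$, hence
\begin{equation*}
\int_{B_r}|\nabla u^R - \nabla\varphi^r|^2 \;=\; \int_{B_r}|\nabla u^R|^2 \;-\; \int_{B_r}|\nabla\varphi^r|^2.
\end{equation*}
It therefore suffices to prove that $\int_{B_r}|\nabla u^R|^2 - \int_{B_r}|\nabla\varphi^r|^2 \leq Cr^2$.

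Next I would exploit minimality on the nodal piece $D_r$. Define the competitor $\tilde u$ on $D_R$ by setting $\tilde u = \varphi^r$ in $D_r$ and $\tilde u = u^R$ in $D_R\setminus D_r$. By the boundary matching described above, $\tilde u = u^R$ on $\partial D_R$, so $\tilde u$ is admissible in the variational problem defining $u^R$. Minimality then yields
\begin{equation*}
\int_{D_r}\tfrac{1}{2}|\nabla u^R|^2 + F(u^R) \;\leq\; \int_{D_r}\tfrac{1}{2}|\nabla\varphi^r|^2 + F(\varphi^r),
\end{equation*}
and since $F$ is bounded (it is smooth and periodic) we obtain
\begin{equation*}
\int_{D_r}|\nabla u^R|^2 - |\nabla\varphi^r|^2 \;\leq\; 2\int_{D_r}\bigl[F(\varphi^r)-F(u^R)\bigr] \;\leq\; C\,|D_r| \;\leq\; Cr^2.
\end{equation*}

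Finally I would upgrade this bound from the single nodal sector $D_r$ to the full disk $B_r$. Both $u^R$ and $\varphi^r$ enjoy the symmetry $u(Gz)=-u(z)$, so $|\nabla u^R|^2$ and $|\nabla\varphi^r|^2$ are invariant under the rotation $G$. Hence the same inequality holds, with the same constant, on each of the $2d$ rotated copies $G^k D_r$, and summing over $k=0,\dots,2d-1$ gives the desired $\int_{B_r}|\nabla u^R|^2 - |\nabla\varphi^r|^2 \leq Cr^2$. Combined with the energy identity, this proves \eqref{comparison of energy}. No step looks like a real obstacle; the only point requiring care is verifying that $\varphi^r$ really does vanish on the nodal rays, which is why the statement constructs $\varphi^r$ as the harmonic extension on $B_r$ (inheriting the symmetry of $u^R$) rather than directly on $D_r$.
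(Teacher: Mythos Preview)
Your proof is correct and follows essentially the same approach as the paper: minimality of $u^R$ against the competitor $\varphi^r$ on $D_r$ (using $u^R=\varphi^r$ on $\partial D_r$), boundedness of $F$ to absorb the potential terms into $Cr^2$, and an integration-by-parts identity turning the energy difference into $\int|\nabla(u^R-\varphi^r)|^2$. The only cosmetic difference is that you perform the integration by parts on $B_r$ and then use the $G$-symmetry to pass from $D_r$ to $B_r$ in the energy-difference estimate, whereas the paper does the integration by parts directly on $D_r$ (and leaves the symmetry step implicit); both routes are equivalent.
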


\medskip

Since we expect $u^R$ grows like $|z|^d$ and $|\nabla u^R|$ grows like $|z|^{d-1}$ with $d\geq 2$, this estimate implies that $u^R$ and $\varphi^r$ are close to each other (after a rescaling) at large scale. Below we will use this inequality to estimate the error $u^R-\varphi^r$.
\begin{proof}
By the minimality of $u^R$, we have
\[\int_{D_r}\frac{1}{2}|\nabla u^R|^2+F(u^R)\leq\int_{D_r}\frac{1}{2}|\nabla \varphi^r|^2+F(\varphi^r)\]
which implies
\begin{equation}
\label{dr}
 \int_{D_r}|\nabla u^R|^2- |\nabla \varphi^r |^2 \leq C r^2
\end{equation}
since $F$ is a bounded periodic function.

\medskip

On the other hand, an integration by parts using the fact that $u^R=\varphi^r$ on $\partial D_r$ shows that
\[\int_{D_r}|\nabla\varphi^r-\nabla u^R|^2=\int_{D_r}|\nabla u^R|^2-|\nabla\varphi^r|^2.\]
 Substituting the above equality into the inequality (\ref{dr}), we get \eqref{comparison of energy}.
\end{proof}
\begin{lem}\label{lem 2}
There exists a constant $C$, independent of $r$ and $R$, such that for all $0<r<R$,
\[\sup_{B_{r/2}(0)}|\varphi^r-u^R|\leq Cr^{3/2}.\]
\end{lem}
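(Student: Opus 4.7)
The plan is to set $w := u^R - \varphi^r$ and bootstrap the $H^1$ bound of Lemma 3.1 into the desired $L^\infty$ bound via Poincar\'e's inequality, interior $H^2$ regularity, and a two-dimensional Gagliardo--Nirenberg interpolation. The crucial setup observation is that, since $\varphi^r$ is by construction the harmonic extension of $u^R|_{\partial B_r}$, we have $w = 0$ on $\partial B_r$, so $w \in H_0^1(B_r)$; moreover $w$ solves $-\Delta w = F'(u^R)$ on $B_r$ with right-hand side uniformly bounded by $\|F'\|_\infty$ because $F$ is smooth and periodic. From Lemma 3.1 we already have $\|\nabla w\|_{L^2(B_r)} \leq Cr$.

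The estimate then proceeds in three routine steps. First, Poincar\'e's inequality on the disk (whose sharp constant scales like $r$) combined with the $H^1$ bound gives
\[ \|w\|_{L^2(B_r)} \leq Cr\,\|\nabla w\|_{L^2(B_r)} \leq Cr^2. \]
Second, standard interior $H^2$ regularity for the Poisson equation on $B_{3r/4} \subset B_r$ yields
\[ \|\nabla^2 w\|_{L^2(B_{3r/4})} \leq C\bigl(r^{-2}\|w\|_{L^2(B_r)} + \|F'(u^R)\|_{L^2(B_r)}\bigr) \leq C(1+r). \]
Third, after the rescaling $\tilde w(y) := w(ry)$, these bounds become $\|\tilde w\|_{L^2(B_1)} \leq Cr$ and $\|\nabla^2 \tilde w\|_{L^2(B_{3/4})} \leq Cr(1+r)$, which for $r \geq 1$ is $\leq Cr^2$. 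The two-dimensional Gagliardo--Nirenberg interpolation with exponent $\theta = 1/2$,
\[ \|\tilde w\|_{L^\infty(B_{1/2})} \leq C\bigl(\|\tilde w\|_{L^2(B_1)}^{1/2}\,\|\nabla^2 \tilde w\|_{L^2(B_{3/4})}^{1/2} + \|\tilde w\|_{L^2(B_1)}\bigr), \]
then produces $\|w\|_{L^\infty(B_{r/2})} \leq C(r^{1/2}\!\cdot r + r) \leq Cr^{3/2}$ in the regime $r \geq 1$.

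For small radii $r \leq 1$ the Gagliardo--Nirenberg step would only deliver $Cr$, which is weaker than $Cr^{3/2}$; here I would instead compare $w$ directly with the Poisson barrier $\tfrac14\|F'\|_\infty(r^2 - |x|^2)$ via the maximum principle, obtaining $\|w\|_{L^\infty(B_r)} \leq Cr^2 \leq Cr^{3/2}$, which closes the argument.

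The heart of the proof, and the source of the exponent $3/2$ rather than $1$ or $2$, is precisely the two-dimensional Gagliardo--Nirenberg exponent $\theta = 1/2$: it is exactly the critical interpolation that converts the unit-ball $L^2$-scale $r$ and $W^{2,2}$-scale $r^2$ (coming respectively from Poincar\'e applied to the $H^1$ bound, and from $L^\infty$ forcing) into the intermediate $L^\infty$-scale $r^{1/2} \cdot r = r^{3/2}$. I do not anticipate any serious obstacle beyond this dimensional bookkeeping; Sobolev embedding of first-order spaces would give only $r$ and elliptic maximum principle alone only $r^2$, so it is the quadratic interpolation that is essential.
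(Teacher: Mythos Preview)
Your proof is correct and takes a genuinely different route from the paper's. Both arguments rest on the same two inputs: the $L^2$ bound $\|w\|_{L^2(B_r)}\leq Cr^2$ (Lemma~3.1 plus Poincar\'e) and the pointwise bound $|\Delta w|\leq C$. You then pass from $L^2$ to $L^\infty$ via interior $H^2$ regularity and the two--dimensional Gagliardo--Nirenberg inequality $\|w\|_{L^\infty}\lesssim\|w\|_{L^2}^{1/2}\|\nabla^2 w\|_{L^2}^{1/2}$, which delivers the exponent $3/2$ cleanly. The paper instead works by hand: after rescaling by $r^{-d}$ it obtains a rough $L^\infty$ bound from a harmonic/Poisson decomposition and the maximum principle, upgrades this to an interior gradient bound, and then uses a Chebyshev--type level--set argument (every ball of radius $\sim r^{-1/2}$ in the rescaled picture contains a point where the difference is at most $r^{3/2-d}$, and integrating the gradient along a segment of that length transfers the bound everywhere). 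Your version is shorter and invokes a standard named inequality; the paper's is more elementary and makes the mechanism behind $3/2=\tfrac12\cdot 1+\tfrac12\cdot 2$ visible as an explicit balance between the $L^2$ scale and the gradient scale. One minor point: your stated Gagliardo--Nirenberg inequality uses norms on nested balls, so in practice you apply the whole--space version to $\chi\tilde w$ for a cutoff $\chi$; this produces a cross term $\|\nabla\tilde w\|_{L^2}$, but that is also $O(r)$ by Lemma~3.1 and does not affect the final exponent.
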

\begin{proof}
We will assume that $r$ is large enough.
Let $\bar{u}^r(z):=\frac{1}{r^d}u^R(rz)$ and $\bar{\varphi}^r(z):=\frac{1}{r^d}\varphi^r(rz)$ for $z\in B_1(0)$. By \eqref{comparison of energy},
\[\int_{B_1(0)}|\nabla\bar{\varphi}^r-\nabla \bar{u}^r|^2\leq Cr^{2-2d}.\]
Since $\bar{u}^r=\bar{\varphi}^r$ on $\partial B_1(0)$, by the
Poincare inequality,
\begin{equation}\label{1}
\int_{B_1(0)}|\bar{\varphi}^r-\bar{u}^r|^2\leq Cr^{2-2d}.
\end{equation}
 Note that
\begin{equation}\label{rescaled equation}
|\Delta(\bar{\varphi}^r-\bar{u}^r)|=|r^{2-d}F^{\prime}(r^d\bar{u}^r)|\leq Cr^{2-d}.
\end{equation}
 Take
a $r_0\in(3/4,1)$ such that
\[\int_{\partial B_{r_0}(0)}|\bar{\varphi}^r-\bar{u}^r|^2\leq 8C r^{2-2d}\]
which is possible because of (\ref{1}).

\medskip

Take the decomposition $\bar{\varphi}^r-\bar{u}^r=h+g$, where $h$ is harmonic in $B_{r_0}(0)$ and $h=\bar{\varphi}^r-\bar{u}^r$
on $\partial B_{r_0}(0)$.
By the mean value property of harmonic functions, we have
\[\sup_{B_{5/8}(0)}|h|\leq Cr^{1-d}.\]
Since $g=0$ on $\partial B_{r_0}(0)$ and
\[|\Delta g|\leq Cr^{2-d}=-\Delta(\frac{Cr^{2-d}}{4}(r_0^2-|z|^2),\]
comparison principle implies
\[\sup_{B_{5/8}(0)}|g|\leq Cr^{2-d}.\]
Combining these two we obtain
\[\sup_{B_{5/8}(0)}|\bar{\varphi}^r-\bar{u}^r|\leq Cr^{2-d}.\]
Combining with \eqref{rescaled equation}, by elliptic estimates we see
\begin{equation}\label{2}
\sup_{B_{9/16}(0)}|\nabla(\bar{\varphi}^r-\bar{u}^r)|\leq Cr^{2-d}.
\end{equation}
 By
\eqref{1},
\[|\{|\bar{\varphi}^r-\bar{u}^r|>r^{3/2-d}\}\cap B_{9/16}(0)|\leq Cr^{-1}.\]
In particular, for any ball $B_{Mr^{-1/2}}(x)\subset B_{9/16}(0)$
where $M$ is a large constant, there exists $y\in
B_{Mr^{-1/2}}(x)\cap \{|\bar{\varphi}^r-\bar{u}^r|<r^{3/2-d}\}$.
Integrating along the segment from $y$ to $x$ and using \eqref{2}, we get
\[|\bar{\varphi}^r(x)-\bar{u}^r(x)|\leq Cr^{3/2-d}~~\text{for any}~~~x\in B_{1/2}(0).\]
Rescaling back we can finish the proof.
\end{proof}

\medskip

\begin{lem}
There exists a constant $C$, independent of $r$ and $R$, such that for any $r\in(0,R/2)$,
\begin{equation}\label{3}
\sup_{B_{r/2}(0)}|\nabla^2\varphi^r-\nabla^2\varphi^{2r}|\leq
\frac{C}{r}.
\end{equation}
\end{lem}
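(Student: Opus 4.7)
The plan is to exploit the fact that both $\varphi^r$ and $\varphi^{2r}$ are harmonic in $B_r(0)$, hence so is the difference $h:=\varphi^r-\varphi^{2r}$ and, componentwise, every partial derivative of $h$. I will first bound $\|\nabla h\|_{L^2(B_r(0))}$ by appealing to \eqref{comparison of energy}, and then upgrade this $L^2$ bound to the desired pointwise Hessian bound on $B_{r/2}(0)$ by the standard interior derivative estimate for harmonic functions.

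The first step is straightforward: I would apply \eqref{comparison of energy} at radii $r$ and $2r$ to obtain
\[
\int_{B_r(0)}|\nabla\varphi^r-\nabla u^R|^2\leq Cr^2,\qquad \int_{B_{2r}(0)}|\nabla\varphi^{2r}-\nabla u^R|^2\leq C(2r)^2\leq Cr^2,
\]
then restrict the second estimate to $B_r(0)\subset B_{2r}(0)$ and invoke the triangle inequality in $L^2$ to conclude $\|\nabla h\|_{L^2(B_r(0))}\leq Cr$.

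The second step uses elementary harmonic analysis. For any $x\in B_{r/2}(0)$ and indices $i,j$, since $\partial_i h$ is harmonic on the ball $B_{r/4}(x)\subset B_r(0)$, the classical interior derivative estimate in dimension $n=2$ gives
\[
|\partial_j\partial_i h(x)|\leq\frac{C}{r^{1+n/2}}\|\partial_i h\|_{L^2(B_{r/4}(x))}\leq\frac{C}{r^{2}}\|\nabla h\|_{L^2(B_r(0))}\leq\frac{C}{r}.
\]
Taking the supremum over $x\in B_{r/2}(0)$ and over $i,j$ yields \eqref{3}.

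I do not expect a genuine obstacle here. The only subtle point is that one must use the $L^2$ input from the energy-comparison lemma rather than the $L^\infty$ input from Lemma \ref{lem 2}: the latter combined with the maximum principle on $B_r(0)$ (using $\varphi^r-\varphi^{2r}=u^R-\varphi^{2r}$ on $\partial B_r(0)$) together with the $C^2$-interior estimate for harmonic functions would deliver only $|\nabla^2 h|\leq Cr^{-1/2}$ on $B_{r/2}(0)$, which is strictly weaker than \eqref{3} for large $r$. The $L^2$ bound on $\nabla h$ is precisely what produces the sharp $C/r$ decay.
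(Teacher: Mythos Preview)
Your proof is correct and is essentially the same as the paper's: the paper also obtains $\int_{B_r(0)}|\nabla\varphi^r-\nabla\varphi^{2r}|^2\leq Cr^2$ from \eqref{comparison of energy} (via the same triangle inequality you wrote out) and then invokes interior gradient estimates for the harmonic difference. Your final paragraph comparing the $L^2$ route to the weaker $L^\infty$ route is a nice observation that the paper does not make explicit.
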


\medskip

\begin{proof}
By \eqref{comparison of energy} we get
\[\int_{B_r(0)}|\nabla\varphi^r-\nabla\varphi^{2r}|^2\leq Cr^2.\]
Since both $\varphi^r$ and $\varphi^{2r}$ are harmonic, by interior gradient estimates we obtain the claim.
\end{proof}
\begin{lem}
For any $r\in(0,R)$,
\[\sup_{B_{r/2}(0)}|\varphi-u^R|\leq Cr^{3/2}.\]
\end{lem}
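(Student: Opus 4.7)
The plan is to combine Lemma \ref{lem 2} with the preceding Hessian estimate (\ref{3}) to bound $\varphi^r - \varphi$ directly on $B_{r/2}$ and then to conclude by the triangle inequality. A central observation is that $\varphi^R = \varphi$: by the $G$-antisymmetry of both $u^R$ and $\varphi$, together with the Dirichlet condition $u^R = \varphi$ on $\partial B_R \cap \bar D$, one deduces $u^R \equiv \varphi$ on all of $\partial B_R$; since $\varphi$ is already harmonic, its harmonic extension from $\partial B_R$ is itself. It therefore suffices to prove $\sup_{B_{r/2}} |\varphi^r - \varphi| \leq Cr$, which combined with Lemma \ref{lem 2} yields $\sup_{B_{r/2}}|\varphi - u^R| \leq Cr + Cr^{3/2} \leq C'r^{3/2}$ for $r \geq 1$; the small range $r \leq 1$ can be dispatched by a separate bootstrap using uniform $C^2$ bounds on $u^R$ and the symmetry-forced vanishing $u^R(0) = 0, \nabla u^R(0) = 0$.

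The main technical step is the Hessian bound $\sup_{B_{r/2}}|\nabla^2(\varphi^r - \varphi)| \leq C/r$. I would first iterate (\ref{3}) dyadically on scales $r_j = 2^j r$: restricting each estimate to $B_{r/2} \subset B_{r_j/2}$ and summing over $j = 0, \ldots, N$ (where $r_{N+1} \in [R/2, R)$ is the first dyadic scale exceeding $R/2$), the geometric sum gives $\sup_{B_{r/2}}|\nabla^2 \varphi^r - \nabla^2 \varphi^{r_{N+1}}| \leq 2C/r$. To bridge the remaining gap from $r_{N+1}$ to $\varphi$, I would use global minimality: testing the energy at $u^R$ against $\varphi$ as a competitor on $D_R$ (legitimate once $u^R = \varphi$ on $\partial B_R$ is known) yields $\int_{B_R}|\nabla u^R - \nabla \varphi|^2 \leq CR^2$; combining with (\ref{comparison of energy}) at scale $r_{N+1}$ gives $\int_{B_{r_{N+1}}}|\nabla \varphi^{r_{N+1}} - \nabla \varphi|^2 \leq CR^2$, and interior $L^2$ elliptic regularity for the harmonic function $\varphi^{r_{N+1}} - \varphi$ then gives $\sup_{B_{r_{N+1}/2}}|\nabla^2(\varphi^{r_{N+1}} - \varphi)| \leq CR/r_{N+1}^2 \leq C/R \leq C/r$ (using $r_{N+1} \geq R/2$), and combining with the dyadic bound proves the Hessian claim.

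Finally I would convert the Hessian bound into a sup bound via the $G$-antisymmetry. The Taylor expansions of $\varphi^r$ and $\varphi$ at the origin contain only harmonic polynomials of degrees $d, 3d, 5d, \ldots$, so $w := \varphi^r - \varphi$ satisfies $w(0) = 0$ and $\nabla w(0) = 0$. Taylor's formula with integral remainder then yields $|w(x)| \leq \tfrac{1}{2}|x|^2 \sup_{[0,x]}|\nabla^2 w| \leq C|x|^2/r$ for $|x| \leq r/2$, hence $\sup_{B_{r/2}}|\varphi^r - \varphi| \leq Cr$ as required.

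The hard part will be the bridging step: (\ref{3}) only applies for $r < R/2$, so dyadic iteration cannot quite reach scale $R$, and a purely qualitative continuity argument ($\varphi^s \to \varphi$ as $s \to R$) would not give the needed quantitative $C/r$ Hessian bound at the final step. The resolution is to exploit global minimality of $u^R$ against the fixed target $\varphi$ (rather than only against the moving family $\varphi^s$): the resulting $L^2$ gradient bound combined with interior harmonic regularity on the large ball $B_{r_{N+1}}$ delivers a Hessian estimate at the penultimate scale of the same order $C/r$ as the dyadic iteration.
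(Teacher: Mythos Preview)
Your approach is correct and follows essentially the same route as the paper: dyadic telescoping of the Hessian estimate \eqref{3}, a bridging step at the top scale using that $\varphi^R=\varphi$ (the paper phrases this as ``checking the proof of the previous lemma''), and then integration from the origin using the symmetry-forced vanishing $\varphi^r(0)=\varphi(0)=0$, $\nabla\varphi^r(0)=\nabla\varphi(0)=0$. You are in fact more explicit than the paper on two points---the identification $\varphi^R=\varphi$ and the small-$r$ regime---both of which the paper leaves implicit.
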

\begin{proof}
Take an $i_0$ such that $R/2<2^{i_0}r\leq R$. Checking the proof of
the previous lemma we see
\[\sup_{B_{2^{i_0-1}r}(0)}|\nabla^2\varphi^{2^{i_0}r}-\nabla^2\varphi|\leq
\frac{C}{2^{i_0}r}.\] Adding this and \eqref{3} from $i=1$ to $i=i_0$
we get
\begin{equation}\label{5}
\sup_{B_{r/2}(0)}|\nabla^2\varphi^r-\nabla^2\varphi|\leq \frac{C}{r}.
\end{equation}
Since for each $r$, $\varphi^r$ has the same symmetry as
$\varphi$ and it is harmonic (recall that the degree of $\varphi$, $d\geq 2$), we have
\[\varphi^r(0)=\varphi(0)=0,~~~\nabla\varphi^r(0)=\nabla\varphi(0)=0.\]
Integrating \eqref{5} twice we obtain,
\begin{equation}\label{4}
\sup_{B_{r/2}(0)}|\varphi^r-\varphi|\leq Cr.
\end{equation}
This combined with Lemma \ref{lem 2} implies the required claim.
\end{proof}

\medskip

A direct corollary of this lemma is the uniform boundedness of $u^R$
on any compact set. Hence we can take the limit
$u_\infty:=\lim\limits_{R\to+\infty}u^R$ which  is a solution of \eqref{equation} on the entire $\mathbb{R}^2$, enjoying the same symmetry
as $\varphi$,  $ \{ u_\infty >0 \}= \{ \varphi >0\}$, and satisfies
\[|u_\infty (x,y)-\varphi(x,y)|\leq C(|x|+|y|)^{3/2}.\]
In particular, $u_\infty$ is unbounded and grows at least quadratically.

This proves Theorem \ref{thm2}.

\begin{rmk}
By \cite{A-C-M}, there exists a second solution $u$ of \eqref{equation} satisfying the symmetry $u(Gz)=-u(z)$, which is bounded in $\mathbb{R}^2$. For example, if $F(u)=1+\cos u$,
we can construct a solution such that $-\pi<u<\pi$ in $\mathbb{R}^2$. In fact, in this case, if we modify $F(u)$ outside $[-\pi,\pi]$ to get a standard double-well potential, it becomes exactly the problem studied in \cite{A-C-M, D-F-P}. The bounded solution produced by this method takes values in $(-\pi,\pi)$ and it is still the solution of the original problem \eqref{equation}.
\end{rmk}


\section{proof of the improvement estimate (\ref{newest})}
\setcounter{equation}{0}

Let $u$ be the solution constructed in the previous section. Written in the exponential polar coordinate
 $(r,\theta)=(e^t,\theta)$, $u$ satisfies
\[\partial_t^2u+\partial_{\theta}^2u+e^{2t}f(u)=0.\]

\medskip

Let $v(t,\theta)=e^{-dt}u(t,\theta)$. Then $ v(t, \theta)$ satisfies
\begin{equation}\label{polar equation}
\partial_t^2v+2d\partial_tv+d^2v+\partial_\theta^2v+e^{(2-d)t}f(e^{dt}v)=0.
\end{equation}

\medskip

By the error bound established in the previous section, for $t\geq0$,
\begin{equation}\label{decay 1}
|v(t,\theta)-\cos(d\theta)|\leq Ce^{(3/2-d)t}.
\end{equation}

\medskip

By interior gradient estimates, for any $\varepsilon>0$ there
exists a constant $C$ such that for any ball $B_1(t,\theta)\subset\mathbb{R}\times\mathbb{S}^1$ (with respect to the product metric on $\mathbb{R}
\times\mathbb{S}^1$) and $u\in C^2(B_1(t,\theta))$,
\begin{eqnarray}\label{gradient estimate}
\sup_{B_{1/2}(t,\theta)}|\partial_\theta u|+|\partial_tu|\leq \varepsilon\sup_{B_1(t,\theta)}|\partial_t^2u+\partial_\theta^2u|+\frac{C}{\varepsilon}\sup_{B_1(t,\theta)}|u|.
\end{eqnarray}
 Since $|e^{(2-d)t}f(e^{dt}v)|\leq Ce^{(2-d)t}$, applying \eqref{gradient estimate} to $v-\cos(d\theta)$
 with $\varepsilon=e^{-\frac{t}{4}}$ we get a constant $C$ such that
for all $t\geq 0$,
\begin{equation}\label{decay 2}
|\partial_\theta(v(t,\theta)-\cos(d\theta))|+|\partial_tv(t,\theta)|\leq Ce^{(\frac{7}{4}-d)t}.
\end{equation}
Differentiating \eqref{polar equation} in $t$ we get
\[\partial_t^2\partial_tv+2d\partial_t\partial_tv+d^2\partial_tv+\partial_\theta^2\partial_tv+e^{2t}f^{\prime}(e^{dt}v)\partial_tv=0.\]
By the bound on $\partial_t v$, we have $|e^{2t}f^{\prime}(e^{dt}v)\partial_tv|\leq Ce^{(\frac{7}{4}+2-d)t}$. By taking $\varepsilon=e^{-t}$ in \eqref{gradient estimate} we obtain
\[|\partial_\theta\partial_tv(t,\theta)|+|\partial_t^2v(t,\theta)|\leq Ce^{(\frac{7}{4}+1-d)t}.\]

\medskip

Substituting this and \eqref{decay 1}, \eqref{decay 2} into \eqref{polar equation}, we get
\begin{equation}\label{decay 3}
|\partial_\theta^2(v(t,\theta)-\cos(d\theta))|\leq Ce^{(\frac{7}{4}+1-d)t}.
\end{equation}
If $d\geq 3$, this gives the exponential convergence of $v$ to $\cos(d\theta)$ in $C^2(\mathbb{S}^1)$.

\medskip

Below we assume that $d\geq 3$.

\medskip

Let $v(t,\theta)=\sum_{j\geq0}c_j(t)\cos(j\theta)$ be the Fourier decomposition of $v(t,\cdot)$. Note that because $v$ is even in $\theta$, there are only terms $\cos(j\theta)$ appearing in this decomposition. Moreover, by our construction,
\[\sum_{j\geq 0}c_j(t)\cos(j\theta+\frac{j\pi}{d})=v(t,\theta+\frac{\pi}{d})=-v(t,\theta)=-\sum_{j\geq0}
c_j(t)\cos(j\theta),\] so
$c_j(t)=0$ if there is no nonnegative integer $k$ such that $j=(2k+1)d$. In particular,
\[c_j(t)\equiv 0~~\text{for}~~j<d.\]

\medskip

Hence below we concentrate on those $c_j(t)$ with $j=d$ and $j\geq 3d$.

\medskip

Multiplying \eqref{polar equation} by $\cos(j\theta)$ and integrating, we get the equation for $c_j(t)$
\[\partial_t^2c_j+2d\partial_tc_j+(d^2-j^2)c_j+e^{(2-d)t}\left(\int_0^{2\pi}f(e^{dt}v)\cos(j\theta)d\theta\right)=0.\]
Denote $g_j(t)=e^{(2-d)t}\left(
\int_0^{2\pi}f(e^{dt}v)\cos(j\theta)d\theta
\right)$. Since $\cos(d\theta)
$ has only non-degenerate critical points and $v(t,\theta)\to\cos(d\theta)$ in $C^2(\mathbb{S}^1)$ as $t\to+\infty$ (cf. \eqref{decay 2} and \eqref{decay 3}), for $t$ large, $v(t,\cdot)$ has only non-degenerate critical points. By the oscillatory integral estimate ( [Section 8.1, \cite{stein}]) we get a constant $C_j$ such that
\begin{equation}\label{4.1}
\int_0^{2\pi}f(e^{dt}v)\cos(j\theta)d\theta = O( e^{-\frac{dt}{2}}), \ \ \ |g_j(t)|\leq C_je^{(2-\frac{3d}{2})t}.
\end{equation}

\medskip

For $t\geq 0$, we have the representation formula
\begin{equation}\label{representation}
c_j(t)=A_je^{-(d+j)t}+B_je^{-(d-j)t}+e^{-(d-j)t}\int_t^{+\infty}e^{(d-j)s}\int_0^se^{(d+j)(\tau-s)}g_j(\tau)d\tau ds.
\end{equation}
Substituting \eqref{4.1} into this and integrating directly, we see the last integral is bounded by $\frac{C_j}{j^2}e^{(2-\frac{3d}{2})t}$. In particular, for $j=d$,
\begin{equation}\label{decay 4}
|c_d(t)-B_d|\leq Ce^{(2-\frac{3d}{2})t}.
\end{equation}
Here, by \eqref{decay 1}, $B_d=1$.

\medskip

 It remains to estimate $v^\|:=v-c_d(t)\cos(d\theta)$. First note that for $j>d$, $|c_j(t)|\leq Ce^{(3/2-d)t}$ by \eqref{decay 1}. Hence we must have $B_j=0$. Next we have
\begin{lem}
For $t$ large, when measured in $L^\infty(\mathbb{S}^1)$,
\[v^\flat:=\sum_{j>d}e^{-(d-j)t}\int_t^{+\infty}e^{(d-j)s}\int_0^se^{(d+j)(\tau-s)}g_j(\tau)\cos(j\theta
)d\tau ds=O(e^{(2-\frac{3d}{2})t}).\]
\end{lem}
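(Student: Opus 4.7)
My plan is to bound the $L^\infty$ norm of $v^\flat(t,\cdot)$ by estimating the modal contributions $T_j(t):=e^{-(d-j)t}\int_t^{+\infty}e^{(d-j)s}\int_0^s e^{(d+j)(\tau-s)}g_j(\tau)\,d\tau\,ds$ individually and then summing over $j>d$.

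First, for each fixed $j>d$, I would substitute $|g_j(\tau)|\le C_j e^{(2-3d/2)\tau}$ from \eqref{4.1} into the triple integral and exchange the order of integration, rewriting the domain $\{t\le s,\,0\le\tau\le s\}$ as $\{\tau\ge 0,\,s\ge\max(t,\tau)\}$. Carrying out the $s$-integration explicitly, the triple integral splits into a $\tau\le t$ piece and a $\tau\ge t$ piece, each of which is an elementary exponential integral. The same kind of direct computation the paper performs for $|c_d-B_d|$ gives, for $j\ge 3d$,
\[
|T_j(t)|\le \frac{C_j}{2j(j-d/2+2)}e^{(2-3d/2)t}+\frac{C_j}{2j(j+d/2-2)}e^{(2-3d/2)t}\le \frac{C_j}{j^2}e^{(2-3d/2)t}.
\]

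The crucial second step is to show $\sum_{j>d}C_j/j^2$ is bounded uniformly in $t$. First, by elliptic regularity applied to $u$ combined with $|u|\le C|x|^d$, one obtains $|\partial_\theta^k v(t,\theta)|\le C_k$ uniformly in $t$ for every $k\ge 0$ (the $r^k$ factors from $\theta$-differentiation cancel the $|x|^{d-k}$ decay of $\nabla^k u$, and the overall $e^{-dt}$ from $v=e^{-dt}u$ cancels the $|x|^d$ growth). I would then split the estimate of $g_j(t)=e^{(2-d)t}\int_0^{2\pi}f(e^{dt}v)\cos(j\theta)\,d\theta$ into two regimes. For $j\le e^{dt/2}$, Fourier-expanding $f(u)=\sum_n f_n\sin(nu)$ and applying the stationary-phase estimate of [Stein, Section~8.1] around the non-degenerate critical points of $v(t,\cdot)$ near $\theta=m\pi/d$ (which exist for $t$ large by \eqref{decay 2}--\eqref{decay 3}), the leading term evaluates the amplitude $\cos(j\theta)$ at the critical points (hence bounded by $1$), giving $|g_j(t)|\le Ce^{(2-3d/2)t}$ with $C$ independent of $j$. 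For $j>e^{dt/2}$, I would integrate by parts $N$ times in $\theta$; Fa\`a di Bruno's formula yields $\|\partial_\theta^N f(e^{dt}v)\|_{L^\infty}\le C_N e^{Ndt}$, so $|g_j(t)|\le C_N e^{(2-d)t}(e^{dt}/j)^N$ for any $N$, and optimizing $N=N(j,t)$ makes the contribution of these high modes decay faster than any power of $e^{-t}$.

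Combining, with $j_0=e^{dt/2}$,
\[
\sum_{j>d}|T_j(t)|\le Ce^{(2-3d/2)t}\sum_{d<j\le j_0}\frac{1}{j^2}+\sum_{j>j_0}\frac{C_j}{j^2}e^{(2-3d/2)t}\le Ce^{(2-3d/2)t},
\]
which is the desired $L^\infty$ bound. The main obstacle is the uniform-in-$j$ oscillatory estimate in the low-frequency regime: a naive stationary-phase bound picks up error terms scaling like $\|\cos(j\theta)\|_{C^k}=j^k$, which after division by $j^2$ would destroy summability. The essential point is that only the leading stationary-phase term (whose amplitude dependence is through a bounded pointwise value) really contributes in the range $j\le e^{dt/2}$; amplitude-derivative factors appear only at subleading order and remain controllable on that range, and the complementary range $j>e^{dt/2}$ is handled by direct integration by parts using the uniform smoothness of $v$.
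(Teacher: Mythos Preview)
Your approach---bound each modal contribution $T_j(t)$ and sum---is genuinely different from the paper's, and the gap lies in the summation step. The integration-by-parts bound $|g_j(\tau)|\le C_N e^{(2-d)\tau}(e^{d\tau}/j)^N$ is only useful when $e^{d\tau}/j<1$, i.e.\ when $j>e^{d\tau}$; for $e^{d\tau/2}<j<e^{d\tau}$ the factor $(e^{d\tau}/j)^N$ is \emph{large}, so your ``high-frequency'' regime should start at $j_0\sim e^{d\tau}$, not $e^{d\tau/2}$. But then in the enlarged low-frequency range $j\lesssim e^{d\tau}$ the amplitude $\cos(j\theta)$ can have $C^k$ norm comparable to the phase parameter $e^{kd\tau}$, and your assertion that ``only the leading stationary-phase term contributes'' is exactly what needs proof---the standard remainder in stationary phase is $O(\lambda^{-3/2}\|a\|_{C^3})$, which for $j\sim e^{d\tau}$ is no better than the main term. (One can try to rescue this by absorbing $e^{\pm ij\theta}$ into the phase and using van der Corput, but then one must rule out degenerate critical points of $\omega_n v(\tau,\cdot)\pm j\theta$ when $j/\omega_n$ is near the range of $\partial_\theta v$, and you do not address this.) There is also a secondary issue: your threshold $j_0=j_0(t)$ depends on the outer variable $t$, while inside $T_j(t)$ you integrate $g_j(\tau)$ over all $\tau\ge 0$; a bound $|g_j(\tau)|\le Ce^{(2-3d/2)\tau}$ valid only for $\tau\ge (2/d)\log j$ does not directly yield $|T_j(t)|\le (C/j^2)e^{(2-3d/2)t}$ with the same uniform $C$.

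The paper sidesteps all of this by summing in $j$ \emph{first}: the geometric series $\sum_{j>d}e^{(t+\tau-2s)j}\cos(j\theta)$ has a closed form, producing a single kernel $K(\theta)$ that is smooth on $\mathbb{S}^1$ (for $t+\tau-2s<0$). One then applies the oscillatory integral estimate \emph{once} to $\int_0^{2\pi}K(\theta)\,f(e^{d\tau}v(\tau,\theta))\,d\theta$, with $K$ playing the role of a fixed amplitude, yielding $O(e^{(2-3d/2)\tau})$ directly; the remaining $s,\tau$ integrals are elementary. This completely avoids the question of uniformity of the oscillatory constant in $j$ and the mode-by-mode summability problem you are wrestling with.
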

\begin{proof}
Direct calculations give, for $t+\tau<2s$,
\[\sum_{j>d}e^{-(d-j)t}e^{(d-j)s}e^{(d+j)(\tau-s)}\cos(j\theta)=e^{t-(2d+2)s+(2d+1)\tau}\frac{\cos(d+1)\theta-e^{t+\tau-2s}\cos d\theta}{1-2e^{t+\tau-2s}\cos\theta+e^{2(t+\tau-2s)}}.\]
Using this kernel, $v^\flat$ can be written as
\[v^\flat(t,\theta)=\int_t^{+\infty}\int_0^s\int_0^{2\pi}e^{t-(2d+2)s+(2d+1)\tau}\frac{\cos(d+1)\theta-e^{t+\tau-2s}\cos d\theta}{1-2e^{t+\tau-2s}\cos\theta+e^{2(t+\tau-2s)}}g(\tau,\theta)d\theta d\tau ds,\]
where $g(\tau,\theta)=e^{(2-d)\tau}f(e^{d\tau}v(\tau,\theta))$.

Note that
\[\frac{\cos(d+1)\theta-e^{t+\tau-2s}\cos d\theta}{1-2e^{t+\tau-2s}\cos\theta+e^{2(t+\tau-2s)}}\]
is uniformly bounded in $C^3(\mathbb{S}^1)$ when $t+\tau-2s\leq 0$. Hence by the oscillatory integral estimate (\cite{stein}),
\[\int_0^{2\pi}\frac{\cos(d+1)\theta-e^{t+\tau-2s}\cos d\theta}{1-2e^{t+\tau-2s}\cos\theta+e^{2(t+\tau-2s)}}g(\tau,\theta)d\theta=O(e^{(2-\frac{3d}{2})\tau}).\]
Substituting this into the above representation formula of $v^\flat$ we finish the proof.
\end{proof}
\begin{lem}
For $t$ large, when measured in $L^\infty(\mathbb{S}^1)$,
\[v^\sharp:=v^\|-v^\flat=O(e^{-\frac{d-1+\sqrt{17d^2-2d+1}}{2}t}).\]
\end{lem}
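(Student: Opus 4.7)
The plan is to show that $v^\sharp$ satisfies the \emph{homogeneous} linearized equation
\[\partial_t^2 v^\sharp+2d\,\partial_t v^\sharp+d^2 v^\sharp+\partial_\theta^2 v^\sharp=0\]
on $(0,\infty)\times\mathbb{S}^1$, and then to read off the decay rate from its Fourier mode expansion. To see this I would verify that both $v^\|=v-c_d(t)\cos(d\theta)$ and $v^\flat$ satisfy the \emph{same} inhomogeneous linear PDE
\[\partial_t^2 w+2d\,\partial_t w+d^2 w+\partial_\theta^2 w=-\sum_{j>d}g_j(t)\cos(j\theta).\]
For $v^\|$ this is immediate upon projecting \eqref{polar equation} onto the modes $j\neq d$ and using $c_d''+2dc_d'+g_d=0$ to eliminate the mode-$d$ contribution. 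For $v^\flat$ it follows from a direct Leibniz-rule computation showing that each $c_j^\flat$ is a particular solution of $(c_j^\flat)''+2d(c_j^\flat)'+(d^2-j^2)c_j^\flat=-g_j$. Subtracting the two equations yields the homogeneous PDE for $v^\sharp$.

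Next I would exploit the rotational symmetry. Since $v(t,\theta+\pi/d)=-v(t,\theta)$ and the mode-$d$ piece has been absorbed into $c_d(t)\cos(d\theta)$, $v^\sharp$ admits the Fourier expansion $v^\sharp(t,\theta)=\sum_{k\ge 1}a_{(2k+1)d}(t)\cos((2k+1)d\theta)$. Each $a_j$ solves the scalar ODE $a_j''+2d a_j'+(d^2-j^2)a_j=0$, whose characteristic roots are $-(d+j)$ and $j-d$. Because $v^\|\to 0$ by \eqref{decay 1} and $v^\flat\to 0$ by the previous lemma, $a_j(t)\to 0$, so the growing branch $e^{(j-d)t}$ is absent and $a_j(t)=A_j e^{-(d+j)t}$. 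The constants $A_j$ are uniformly bounded in $j$: they are the Fourier coefficients of $v^\sharp(0,\cdot)=v^\|(0,\cdot)-v^\flat(0,\cdot)\in L^\infty(\mathbb{S}^1)$, and the boundedness of $v^\flat(0,\cdot)$ in turn follows from $|g_j|\le C$ applied in the representation formula (which gives $|c_j^\flat(0)|\lesssim 1/j^2$, hence a summable series). Summing the geometric series gives, for $t$ large,
\[|v^\sharp(t,\theta)|\le\sum_{k\ge 1}|A_{(2k+1)d}|\,e^{-(2k+2)dt}\le 2C\,e^{-4dt},\]
and since $\tfrac{d-1+\sqrt{17d^2-2d+1}}{2}<4d$ for every $d\ge 2$ (equivalent to $0<32d^2+16d$), the stated estimate follows at once, with the sharper effective rate $e^{-4dt}$.

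The main technical hurdle is the Leibniz-rule verification that the specific double integral defining $c_j^\flat$ really is a particular solution of the ODE with source $-g_j$; this rests on the Wronskian identity for the fundamental pair $e^{-(d+j)t},e^{(j-d)t}$, and requires some care to check that the boundary contributions at $\tau=s$ and $s=t$ combine correctly. Once that identification is in place, the argument reduces to elementary Fourier analysis on the cylinder: $v^\sharp$ is exhibited as a pure homogeneous Fourier series whose amplitudes $A_j$ are the $L^\infty$-controlled Fourier coefficients of $v^\sharp(0,\cdot)$, and the decay emerges by summing a geometric series in which the lowest surviving mode $j=3d$ is the slowest-decaying term.
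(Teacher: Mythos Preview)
Your proof is correct and takes a genuinely different route from the paper's. Both arguments begin by showing that $v^\sharp$ satisfies the homogeneous equation $\partial_t^2 v^\sharp+2d\,\partial_t v^\sharp+d^2 v^\sharp+\partial_\theta^2 v^\sharp=0$, but then diverge. The paper multiplies by $v^\sharp$ and integrates over $\mathbb{S}^1$ to obtain a second--order differential inequality for $\int_{\mathbb{S}^1}(v^\sharp)^2\,d\theta$, invokes the spectral gap $\int(\partial_\theta v^\sharp)^2\ge (3d)^2\int(v^\sharp)^2$ (valid since all modes $|j|\le 3d-1$ are absent), and then applies a comparison principle against the explicit supersolution $e^{-(d-1+\sqrt{17d^2-2d+1})t}$; a final elliptic estimate upgrades the $L^2$ bound to $L^\infty$. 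You instead decompose $v^\sharp$ directly into its Fourier modes $a_j(t)\cos(j\theta)$ with $j\in\{3d,5d,\ldots\}$, observe that each $a_j$ solves the scalar ODE with characteristic roots $-(d\pm j)$, kill the growing branch using $v^\sharp\to 0$, and sum the resulting geometric series. Your approach is more elementary (no maximum/comparison principle, no elliptic regularity step) and in fact yields the sharper decay rate $e^{-4dt}$, since the slowest surviving mode is $j=3d$ with rate $d+3d=4d$; the paper's energy method loses sharpness when it drops the nonnegative term $2\int(\partial_t v^\sharp)^2$ in passing from the identity to an inequality. The one point that deserves care in your write--up is the uniform control of the initial Fourier coefficients $A_j=a_j(0)$: your observation that $|c_j^\flat(0)|\lesssim j^{-2}$ (from the crude bound $|g_j(\tau)|\le Ce^{(2-d)\tau}$ inserted into the double integral) is exactly what is needed to make $v^\flat(0,\cdot)$, and hence $v^\sharp(0,\cdot)$, lie in $L^\infty(\mathbb{S}^1)$.
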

\begin{proof}
Direct calculations show that
\[(\partial_t^2+2d\partial_t+\partial_\theta^2+d^2)v^\flat+e^{(2-d)t}f(e^{dt}v(t,\theta))=0.\]
Hence
\[(\partial_t^2+2d\partial_t+\partial_\theta^2+d^2)v^\sharp=0.\]
Multiplying by $v^\sharp$ and integrating on $\mathbb{S}^1$, we get
\[\frac{d^2}{dt^2}\int_0^{2\pi}(v^\sharp)^2 d \theta+(2d-2)\frac{d}{dt}\int_0^{2\pi}(v^\sharp)^2 d \theta +2d^2\int_0^{2\pi}(v^\sharp)^2 d \theta
-2\int_0^{2\pi}(\partial_\theta v^\sharp)^2 d \theta=0.\]
By our construction, for any $t\geq0$, $v^\sharp$ is orthogonal to $\cos(j\theta), \sin(j\theta)$ for every $|j|\leq 3d-1$.
Hence
\[\int_0^{2\pi}(\partial_\theta v^\sharp)^2 d \theta \geq(3d)^2  \int_0^{2\pi}(v^\sharp)^2 d \theta,\]
and
\[L\int_0^{2\pi}(v^\sharp)^2 d \theta :=\frac{d^2}{dt^2}\int_0^{2\pi}(v^\sharp)^2 d \theta +(2d-2)\frac{d}{dt}\int_0^{2\pi}(v^\sharp)^2 d \theta -16d^2\int_0^{2\pi}(v^\sharp)^2 d \theta
\geq0.\]
By \eqref{decay 1}, \eqref{decay 4} and the previous lemma, we have the decay estimate
\[\int_0^{2\pi}v^\sharp(t,\theta)^2d\theta\leq Ce^{(3-2d)t}.\]
Let $\left(\int_0^{2\pi}(v^\sharp(0,\theta)^2d\theta\right)e^{-(d-1+\sqrt{17d^2-2d+1})t}$ be a solution
of $Lh=0$ which has the same boundary value at $t=0$ and $t=+\infty$. By the comparison principle we get
 for any $t\geq0$,
\[\int_0^{2\pi}v^\sharp(t,\theta)^2d\theta\leq(\int_0^{2\pi}v^\sharp(0)^2 d \theta )e^{-(d-1+\sqrt{17d^2-2d+1})t}.
\]
Then by applying standard elliptic estimates to $v^\sharp$ we get its $L^\infty(\mathbb{S}^1)$ bound.
\end{proof}

For $d\geq 2$,
\[\frac{d-1+\sqrt{17d^2-2d+1}}{2}\geq\frac{3d}{2}-2.\]
Putting the above estimates together we see for every $d\geq 3$,
\[\sup|v(t,\theta)-\cos(d\theta)|\leq Ce^{-(\frac{3d}{2}-2)t}.\]
Coming back to $u$, we get a constant $C$ such that for all $z\in\mathbb{C}$
\[|u(z)-\varphi(z)|\leq C(1+|z|)^{-(\frac{d}{2}-2)}\]
which proves (\ref{newest}).

\medskip

\end{document}